\documentclass[12pt,a4paper,twoside,reqno]{amsart}

\allowdisplaybreaks


\usepackage[all,cmtip]{xy}
\usepackage{amsmath,amscd}
\usepackage{amssymb,amsfonts}
\usepackage[driver=pdftex,margin=3cm,heightrounded=true,centering]{geometry}
\usepackage{mathtools}
\usepackage{tensor}
\usepackage{url}
\usepackage[colorlinks=true,linkcolor=blue,citecolor=blue]{hyperref}
\usepackage{slashed}
\usepackage[new]{old-arrows}


\setcounter{tocdepth}{2}

\tolerance=2000
\emergencystretch=20pt

\usepackage{graphicx}


\usepackage{a4wide}

\usepackage{thmtools}
\declaretheorem[style=theorem,name={Theorem}]{theoremletter}


\usepackage{amsthm}

\theoremstyle{plain}

\newtheorem{theorem}{Theorem}[section]
\newtheorem*{thm*}{Theorem}

\theoremstyle{definition}

\theoremstyle{remark} 

\newtheorem{remark}[theorem]{Remark}

\theoremstyle{plain}

\usepackage{amscd}


\numberwithin{equation}{section}


\newcommand{\alpheqn}[1][\relax]{
     \refstepcounter{equation}
     \if#1\relax \relax
       \else \label{#1}
     \fi  
     \setcounter{saveeqn}{\value{equation}}%
    \setcounter{equation}{0}%
    \renewcommand{\theequation}{\thealphequation}}
\newcommand{\reseteqn}{\setcounter{equation}{\value{saveeqn}}%
     \renewcommand{\theequation}{\thearabicequation}}


\providecommand{\mathscr}{\mathcal} 
\IfFileExists{mathrsfs.sty}{
\usepackage{mathrsfs}}         
   {
     \IfFileExists{eucal.sty}{
        \usepackage[mathscr]{eucal}} 
   {
   }
}



\newcommand{\ev}{{\operatorname{ev}}}
\newcommand{\varps}{{\varepsilon}}
\newcommand{\vertiii}[1]{{\left\vert\kern-0.25ex\left\vert\kern-0.25ex\left\vert #1 
    \right\vert\kern-0.25ex\right\vert\kern-0.25ex\right\vert}}
\newcommand{\Bvert}[1]{{\Big\vert\kern-0.25ex\Big\vert\kern-0.25ex\Big\vert #1 
    \Big\vert\kern-0.25ex\Big\vert\kern-0.25ex\Big\vert}}
\newcommand{\bvert}[1]{{\big\vert\kern-0.25ex\big\vert\kern-0.25ex\big\vert #1 
    \big\vert\kern-0.25ex\big\vert\kern-0.25ex\big\vert}}
\newcommand{\nvert}[1]{{\vert\kern-0.25ex\vert\kern-0.25ex\vert #1 
    \vert\kern-0.25ex\vert\kern-0.25ex\vert}}

\renewcommand{\leq}{\leqslant}
\renewcommand{\geq}{\geqslant}

\newcommand{\C}[1]{\mathcal{#1}}

\newcommand{\E}[1]{\emph{#1}}

\newcommand{\inn}[1]{\langle #1 \rangle}


\newcommand{\A}{{\mathcal{A}}}
\newcommand{\BB}{{\mathbb{B}}}
\newcommand{\CC}{{\mathbb{C}}}
\newcommand{\NN}{{\mathbb{N}}}
\renewcommand{\S}{{\mathcal{S}}}
\renewcommand{\E}{{\mathcal{E}}}


\makeatletter
\@namedef{subjclassname@2020}{%
  \textup{2020} Mathematics Subject Classification}
\makeatother


\begin{document}

\author{David Kyed}
\address{David Kyed, Department of Mathematics and Computer Science, University of Southern Denmark, Campusvej 55, DK-5230 Odense M, Denmark}
\email{dkyed@imada.sdu.dk}

\author{Ryszard Nest}
\address{Ryszard Nest,  Department of Mathematical Sciences, Universitetsparken 5, 2100 København {\O} \\ Denmark}
\email{rnest@math.ku.dk}

\subjclass[2020]{58B34, 46L89, 46L30} 

\keywords{Spectral triples, Connes' metric, State spaces.} 

\dedicatory{Dedicated to our friend, colleague and teacher, Erik Christensen}

\title{Finiteness of metrics on state spaces}

\begin{abstract}
We show that Connes' metric on the state space associated with a spectral triple is nowhere infinite exactly when it is globally bounded. Moreover, we produce a family of simple examples showing that this is not automatically the case.
\end{abstract}

\maketitle

\section{Introduction}

  Over the past 30 years, Connes' non-commutative geometry  has  developed into an important independent branch of mathematics with strong ties to operator algebras, classical geometry, number theory and theoretical physics \cite{MarcCons:NCG-and-number-theory, Con:NCG,  Connes-Marcolli}. 
  The key object in non-commutative geometry is a so-called \emph{spectral triple}, $(\A, H, D)$, which, in  its simplest form,  consists of a Hilbert space $H$ together with a unital $*$-subalgebra $\A\subset \BB(H)$ of the bounded operators on $H$ and an unbounded, densely defined, self-adjoint operator $D$ on $H$ satisfying the following two requirements:
 \begin{itemize}
\item[(i)] Each $a\in \A$ preserves the domain of $D$ and the commutator $[D,a]=Da-aD$ extends to a bounded operator on $H$.
\item[(ii)] The operator $D$ has compact resolvent; i.e.~$(i+D)^{-1}$ is a compact operator.
\end{itemize}
The motivating example from classical geometry arises from a connected, closed,  Riemannian spin  manifold $M$, where $H$ is the Hilbert space of $L^2$-sections of the spinor bundle, $D$ is the associated Dirac operator and $\A$ is the algebra $C^\infty(M)$ of smooth functions  acting as multiplication operators on $H$. In this setting, Connes realised  \cite[Proposition 1]{Con:CFH} that the Riemannian metric $d$ on $M$ may be recovered from the associated spectral triple  by means of the formula
\begin{align}\label{eq:reconstruct-riemmanian}
d(p,q)=\sup\big\{ |f(p)-f(q)| \mid f\in C^\infty(M),  \|[D,f]\| \leq 1  \big\},
\end{align}
where  $\|[D,f]\|$ denotes the operator norm of the bounded commutator.  Denoting by $\ev_p\colon C(M)\to \CC$ the pure state on $C(M)$ given by evaluation at $p\in M$, the equation \eqref{eq:reconstruct-riemmanian} may be rewritten as

\begin{align*}
d(p,q)=\sup\big\{ |\ev_p(f)-\ev_q(f)| \mid f\in C^\infty(M),  \|[D,f]\| \leq 1  \big\},
\end{align*}
and the right hand side may therefore be interpreted as a metric on the pure states which agrees with the Riemannian metric on $M$.
For a general spectral triple $(\A, H, D)$, it is therefore natural to define a metric on the state space $\S (A)$ of the $C^*$-closure $A$ of $\A$ by setting
\begin{align}\label{eq:connes-metric}
d_D(\mu, \nu):=\sup\big\{|\mu(a)-\nu(a)| \mid a\in \A, \|[D,a]\|\leq 1 \big\}
\end{align}
for $\mu, \nu\in \S (A)$.
This construction also dates back to  \cite{Con:CFH} and is often referred to as the \emph{Connes metric} or the \emph{Monge-Kantorovi\v{c} metric} \cite{KaRu:FSE, KaRu:OSC} in the literature.
A bit of care is needed at this point, since the quantity defined in \eqref{eq:connes-metric} is only an \emph{extended metric}, in that it can attain the value infinity, but otherwise satisfies the usual axioms for a metric. When the spectral triple arises from a  spin manifold as described above, $d_D$ metrises the weak$^*$-topology on the state space $\S (A)$ (see \cite{Con:CFH} or \cite[Proposition 8.1]{KK:DCQ} for a  more detailed explanation), and since $\S (A)$ is weak$^*$ connected, it follows that $d_D$   cannot attain the value infinity and, in turn, that it assigns $\S (A)$ a finite diameter.
For a general spectral triple, the question of when $d_D$ metrises the weak$^*$-topology turns out to be rather subtle, and led Rieffel to introduce his theory of quantum metric spaces \cite{Rie:GHD, Rie:MSS, Rie:CQM, Lat:QGH, Lat:GPS}.  
The more modest question of when the extended metric $d_D$ is finite (in the sense of not attaining the value infinity) has also been investigated, but the literature available is unfortunately not optimal, and the aim of the present note is to clarify the situation.  Before proceeding, we briefly summarise the history of this problem:  In his seminal paper, Connes remarked  \cite[Proposition 4]{Con:CFH} that under the additional assumption that the set
\[
\C L_1:=\{a\in \A \mid  \|[D,a]\|\leq 1\}
\]
projects to a  bounded set in the quotient $A/\mathbb{C}$, then the metric $d_D$ is finite. However,  since the difference of two states descends to a functional of norm at most $2$ on $A/\mathbb{C}$, this condition even ensures that   $d_D$ is globally bounded; i.e.~that state space has finite diameter with respect to $d_D$. So, at least a priori, Connes' condition is stronger than the metric $d_D$ being finite but, as we will show below, this is not actually the case.  An easy necessary condition for $d_D$ to be finite is that $D$ only commutes with scalar multiples of the identity  (see the paragraph preceding the proof of Theorem \ref{thm:main-thm-B})
and in the unpublished preprint \cite[Proposition B.1]{Rennie-Varilly} it is claimed that this condition is also sufficient. This result was  later quoted in the unpublished preprint \cite{BMR:DSS}, 
but beyond these references, there seem to be no general statements along these  lines available in the literature, although earlier partial results were obtained in \cite{AL:Connes-metric}. 
 Unfortunately, the proof of  \cite[Proposition B.1]{Rennie-Varilly} contains a gap, and in Theorem \ref{thm:main-thm-B} below we provide concrete counter examples to the statement in  \cite[Proposition B.1]{Rennie-Varilly}.
 Before proceeding to Theorem \ref{thm:main-thm-B}, we first prove:

\begin{theoremletter}\label{thm:main-thm-A}
For a unital  spectral triple $(\A, H, D)$, the metric $d_D$ is finite if and only if it assigns a finite diameter to the state space. 
\end{theoremletter}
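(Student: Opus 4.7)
The ``if'' direction is trivial, so the task is the converse: assuming $d_D(\mu,\nu)<\infty$ for every pair of states, one needs to bound $d_D$ uniformly. The plan is to reduce this to the uniform boundedness principle. Let $V:=A/\CC \cdot 1$ denote the quotient Banach space with quotient map $\pi\colon A \to V$, and set $\C L_1:=\{a \in \A : \|[D,a]\|\leq 1\}$. The aim is to show that $\pi(\C L_1)$ is norm-bounded in $V$; this already suffices, by the argument from Connes' original paper recalled in the introduction, to yield global boundedness of $d_D$. Indeed, if $\|\pi(a)\|\leq K$ for every $a\in \C L_1$, then for any $\varepsilon>0$ one finds $\lambda\in \CC$ with $\|a-\lambda\|\leq K+\varepsilon$, and hence $|\mu(a)-\nu(a)|=|(\mu-\nu)(a-\lambda)|\leq 2(K+\varepsilon)$ for all states $\mu,\nu$, so that $d_D(\mu,\nu)\leq 2K$.

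The hypothesis $d_D(\mu,\nu)<\infty$ reads
\[
\sup_{a \in \C L_1} |(\mu-\nu)(a)| <\infty \qquad \text{for every } \mu,\nu\in \S(A),
\]
and since each difference $\mu-\nu$ annihilates $\CC\cdot 1$, it descends to a functional $\overline{\mu-\nu}\in V^*$. Thus $\pi(\C L_1)\subset V$ is bounded against every functional of this specific form. The decisive step is to upgrade this to full \emph{weak} boundedness of $\pi(\C L_1)$, i.e.~to show that the linear span of $\{\overline{\mu-\nu}:\mu,\nu\in\S(A)\}$ equals all of $V^*$. For this I would lift any $\phi\in V^*$ to a functional $\widetilde\phi\in A^*$ with $\widetilde\phi(1)=0$, split it into self-adjoint parts $\widetilde\phi=\widetilde\phi_1+i\widetilde\phi_2$ (both of which still annihilate $1$), and apply the Jordan decomposition $\widetilde\phi_j=\widetilde\phi_j^+-\widetilde\phi_j^-$. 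The identity $\widetilde\phi_j^+(1)=\widetilde\phi_j^-(1)$ then exhibits each $\widetilde\phi_j$ as a scalar multiple of a difference of two states, so $\phi$ is a $\CC$-linear combination of descents of such differences.

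Once this span description is established, the uniform boundedness principle in its ``weakly bounded implies norm bounded'' form delivers the required norm-boundedness of $\pi(\C L_1)$, completing the proof. The main conceptual obstacle is precisely the passage from pointwise boundedness against the explicit family of state-differences to weak boundedness against all of $V^*$, which is the content of the Jordan-decomposition step above; the remaining ingredients are standard Banach-space machinery together with the elementary observation that bounding $\pi(\C L_1)$ in $V$ immediately bounds $d_D$.
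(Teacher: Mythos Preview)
Your proof is correct and is essentially the same as the paper's: both reduce to the uniform boundedness principle after using the Jordan decomposition of functionals to pass from boundedness against state-differences to weak boundedness. The only cosmetic difference is that you work in the quotient $A/\CC$ while the paper fixes a state $\mu_0$ and works with the concrete slice $\E=\{a\in\A:\mu_0(a)=0,\ \|[D,a]\|\leq 1\}$ inside $A$; these are equivalent formulations of the same argument.
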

As already indicated, Theorem \ref{thm:main-thm-A} implies that the criterion from \cite[Proposition 4]{Con:CFH} discussed above is both necessary and sufficient; see Remark \ref{rem:connes-criterion} for more details. The proof of Theorem \ref{thm:main-thm-A}  is a simple application of the Principle of Uniform Boundedness, but despite its simple proof,  we have not been able to find the statement  anywhere in the literature and have therefore chosen to single it out here as a separate result for future reference. 
 Although formulated in the language of spectral triples, the statement in  Theorem \ref{thm:main-thm-A}  holds true, more generally, in the setting  of Lipschitz semi-norms on operator systems, and we elaborate on this aspect in Remark \ref{rem:Lip-rem} below.\\
 
 As alluded to above, the second result of the present note is the following:
 
 \begin{theoremletter}\label{thm:main-thm-B}
 For any strictly increasing  sequence $(d_n)_{n\in \NN}\subset  [0,\infty)$ with  $\lim_{n\to \infty} d_n=\infty$ there exists a unital spectral triple $(\A, H, D)$ such that: 
\begin{itemize}
\item[(a)] The spectrum of $D$ agrees with the closure of the set $\{d_n \mid n\in \NN \}$.
\item[(b)] The algebra $\A$ is commutative.
\item[(c)] The only elements from $\A$ that commute with $D$ are the scalar multiples of the unit.
\item[(d)] The extended metric $d_D$ on the state space attains the value infinity. 
\end{itemize}
\end{theoremletter}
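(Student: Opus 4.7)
The plan is to construct the spectral triple as an infinite direct sum of two-dimensional pieces, using a family of carefully chosen rank-one projections to realize Lipschitz-unbounded elements. I would set $H := \bigoplus_{n \geq 1} H_n$ with $H_n = \CC^2$ and let $D$ act block-diagonally as $D|_{H_n} = \operatorname{diag}(d_{2n-1}, d_{2n})$, so that $\sigma(D) = \{d_n : n \in \NN\}$ (already closed) and $(i+D)^{-1}$ is compact. For each $n$ I would pick an angle $\theta_n \in (0, \pi/2)$ and let $P_n$ be the rank-one projection onto $v_n := (\cos\theta_n, \sin\theta_n)^T \in H_n$, extended by zero to $H$. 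A direct two-by-two computation gives $\|[D, P_n]\| = |d_{2n} - d_{2n-1}| \cdot |\cos\theta_n\sin\theta_n|$, so choosing $\theta_n$ so small that $|\cos\theta_n\sin\theta_n| \leq \big(n \max(1,|d_{2n}-d_{2n-1}|)\big)^{-1}$ forces $\|[D, P_n]\| \leq 1/n$. The algebra would then be
\[
\A := \Big\{\lambda I_H + \sum_{n \in F} c_n P_n \;:\; \lambda, c_n \in \CC,\ F \subset \NN \text{ finite}\Big\}.
\]

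Next I would verify the four conditions. The relations $P_n P_m = \delta_{nm} P_n$ make $\A$ a unital commutative $*$-subalgebra of $\BB(H)$, and every element of $\A$ differs from a scalar by a finite-rank operator, so $[D, a]$ is automatically bounded; this simultaneously takes care of the spectral triple axioms and yields (a) and (b). Condition (c) follows because $a = \lambda I_H + \sum c_n P_n$ commutes with $D$ precisely when $c_n [D, P_n] = 0$ for every $n$, and $\theta_n \in (0, \pi/2)$ guarantees $[D, P_n] \neq 0$, forcing every $c_n = 0$.

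The main obstacle is (d), for which I expect mixed states rather than pure ones to play the key role. Let $A = \overline{\A}^{\|\cdot\|}$; for each $n$ let $\chi_n \in \S(A)$ be the character $\chi_n(\lambda I_H + \sum c_m P_m) := \lambda + c_n$, and let $\chi_\infty(\lambda I_H + \sum c_m P_m) := \lambda$ be the character vanishing on the closed span of the $P_n$. I would select weights $p_n > 0$ with $\sum_n p_n = 1$ but $\sum_n n p_n = \infty$, e.g.~$p_n = 6/(\pi^2 n^2)$, and form the mixed state $\mu := \sum_n p_n \chi_n$. For any finite $F \subset \NN$ the element $a_F := \sum_{n \in F} \|[D, P_n]\|^{-1} P_n$ lies in $\A$ with $\|[D, a_F]\| = 1$, yet
\[
\mu(a_F) - \chi_\infty(a_F) = \sum_{n \in F} \frac{p_n}{\|[D, P_n]\|} \geq \sum_{n \in F} n\, p_n,
\]
and letting $F$ exhaust $\NN$ exhibits $d_D(\mu, \chi_\infty) = \infty$. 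The subtle point here -- and entirely consistent with Theorem~\ref{thm:main-thm-A} -- is that each pure pair $(\chi_n, \chi_\infty)$ yields only the finite value $\|[D, P_n]\|^{-1}$, so the infinite distance has to be extracted from the richer supply of mixed states on the spectrum of $A$; engineering the tail behavior of $\mu$ so that this sum diverges is the heart of the argument.
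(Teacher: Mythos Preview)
Your construction is correct and follows essentially the same route as the paper: both build $D$ as the diagonal operator with eigenvalues $d_n$ on $\ell^2(\NN)\cong\bigoplus_n\CC^2$, take the algebra generated by rank-one projections onto slightly tilted unit vectors in the two-dimensional blocks so that the commutators are nonzero but arbitrarily small, and then exhibit infinite distance using a mixed state obtained as a convex series of the corresponding vector states against a state annihilating all the projections. The differences are purely cosmetic---your angle parameter $\theta_n$ plays the role of the paper's $\varepsilon_n$, you test with finite sums $a_F$ and weights $p_n=6/(\pi^2 n^2)$ where the paper uses single rescaled projections and weights $2^{-n}$ (forcing a slightly faster commutator decay), and your $\chi_\infty$ corresponds to the paper's vector state at $f_1$. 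Your closing observation that all \emph{pure} pairs are at finite distance, so that a genuinely mixed state is needed, is a nice point not made explicit in the paper.
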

The  spectral triples constructed in Theorem \ref{thm:main-thm-B} thus  provide concrete counter examples to the statement in \cite[Proposition B.1]{Rennie-Varilly}. 
Note also, that combining Theorem \ref{thm:main-thm-A} with \cite[Theorem 5.4 \& 5.5]{PutJul:subshifts} provides additional, and perhaps more natural, counter examples, for which  it is, however,  less clear how to directly access states whose distance is infinite.
Lastly, note that  a similar result for  non-unital  spectral triples is obtained in \cite[Proposition 3.10]{CAM-Moyal}. For these, 
one may of course pass to the minimal unitization \cite[Lemma 5.4]{MT13}, but this comes at the cost of loosing the compactness of the resolvent.

\subsection*{Acknowledgements}
The authors gratefully acknowledge the financial support from  the Independent Research Fund Denmark through grant no.~9040-00107B and 1026-00371B, and from the the ERC through the MSCA Staff Exchanges grant no.~101086394.
Moreover, they are grateful to Erik Christensen and Jens Kaad for many illuminating discussions on  matters related to metrics on state spaces, to Max Holst Mikkelsen for pointing out an inaccuracy in a preliminary version of the paper, to Pierre Martinetti and Jean-Christophe Wallet for making them aware of the references  \cite{CAM-Moyal} and \cite{MT13} and to the anonymous referee for helpful clarifications.

\section{Proofs}
We first carry out the short proof of  Theorem \ref{thm:main-thm-A}.

\begin{proof}[Proof of Theorem \ref{thm:main-thm-A}]
The backwards implication is obvious, so we only need to prove that if $d_D$ does not attain the value infinity then it assigns a finite diameter to the state space $\S(A)$, where $A$, as above, denotes the $C^*$-closure of $\A$ in $\BB(H)$. Fix a state $\mu_0\in \S(A)$. By the triangle inequality, it suffices to show that there exists a constant $C>0$ such that $d_D(\mu, \mu_0)\leq C$ for any other $\mu \in \S(A)$. Note that
\begin{align*}
d_D(\mu, \mu_0)&=\sup\{ |\mu(a)-\mu_0(a)| \mid a\in \A,  \|[D,a]\|\leq 1 \} \\
& =\sup\{ |\mu(a-\mu_0(a)1)| \mid a\in \A,  \|[D,a]\|\leq 1 \}.
\end{align*}
Since $[D, a-\mu_0(a)1]=[D,a]$, we have

\[
\left\{a-\mu_0(a)1 \mid \|[D,a]\|\leq 1\right\}=\left\{a\in \A \mid \mu_0(a)=0 \ \text{ and } \ \|[D,a]\|\leq 1 \right\}=:\E
\]
and our assumption therefore implies that $\{\mu(a) \mid a \in \E\}$ is bounded for any $\mu\in \S(A)$. Since any bounded functional $\varphi\in A^*$ is a linear combination of four elements in $\S(A)$ \cite[Corollary 4.3.7]{KaRi:FTO} it follows that $\{\varphi(a) \mid a \in \E\}$ is bounded for all $\varphi\in A^*$, and by the Principle of Uniform Boundedness (see e.g.~\cite[Proposition 8.11]{MeiseVogt}) we obtain the $\E$ is bounded in the operator norm $\|\cdot \|$. As states have norm 1, it therefore follows that
\[
d_D(\mu,\mu_0)=\sup\{ |\mu(a)| \mid a\in \E\}\leq \sup\{\|a\| \mid a\in \E \}<\infty,
\]
and $C:=\sup\{\|a\| \mid a\in \E \}$ therefore satisfies the desired inequality.
\end{proof}

\begin{remark}\label{rem:Lip-rem}
We have chosen to formulate Theorem  \ref{thm:main-thm-A} within the realm of spectral triples, but it should be remarked that the result holds true, more generally, in the setting of operator systems with Lipschitz seminorms (cf.~\cite[Definition 2.1]{KK:quantumSU2}), which constitutes the basis in (one version of) Rieffel's theory of compact quantum metric spaces \cite{Rie:MSS, Rie:CQM}. 
More precisely, if $X$ is a complete operator system and $L\colon X\to [0,\infty]$ is a Lipschitz seminorm, then by repeating the proof of Theorem \ref{thm:main-thm-A} verbatim, we obtain that  the extended metric defined on the state space $\S(X)$ by the formula
\[
d_L(\mu, \nu):=\sup\{|\mu(x)-\nu(x)| \mid x\in X, L(x)\leq 1\}
\]
is finite  if and only if it assigns a finite diameter to $\S(X)$.  To make the link with the above constructions explicit, one obtains a Lipschitz seminorm from a spectral triple $(\A,H, D)$ by setting $L(a)=\|[D,a]\|$ for $a\in \A$ and extending $L$ to the $C^*$-closure $A$ by the value infinity.

\end{remark}

\begin{remark}\label{rem:connes-criterion}
If $A$ is a unital $C^*$-algebra and $\mu_0\in \S(A)$ is a state, then one obtains an isomorphism of Banach spaces $\Phi\colon A\simeq \CC \oplus A/\CC$ given by $a\mapsto (\mu_0(a), \pi(a))$ where $\pi\colon A\to A/\CC$ denotes the quotient map.
Returning to the setting of the proof of Theorem \ref{thm:main-thm-A}, we saw that finiteness of the metric $d_D$ implies norm-boundedness of the set
\[
\E:=\{a\in \A \mid \mu_0(a)=0 \ \text{ and } \ \|[D,a]\|\leq 1 \}
\]
which, in turn, implies norm-boundedness of $\Phi(\E)=\{0\} \times \{\pi(a) \mid \|[D,a]\|\leq 1\}$.  Hence Connes' sufficient condition for finiteness of $d_D$ from \cite[Proposition 4]{Con:CFH} is also necessary. 
\end{remark}
Before embarking on the proof of Theorem  \ref{thm:main-thm-B}, we briefly explain why its condition (c) is relevant.  
Assume therefore that $(\A,H,  D)$ is a spectral triple and that there exists $a\in \A\setminus \CC 1$ such that $[D,a]=0$. If $\mu(a)=\nu(a)=:\lambda$ for all $\mu,\nu\in \S(A)$, then all states vanish on $\lambda 1-a$ which cannot happen since $a\neq \lambda 1$ \cite[Theorem 4.3.4]{KaRi:FTO}. Hence, there exist $\mu,\nu\in \S(A)$ with $\mu(a)\neq \nu(a)$ and since $D$ also commutes with all scalar multiples of $a$, we conclude that
\[
d_D(\mu, \nu) \geq |\mu(Ra)-\nu(Ra)|=R|\mu(a)-\nu(a)|
\]
for all $R>0$ and hence that $d_D(\mu,\nu)=\infty$.  This argument is well known and can also be found in the existing literature, but  we opted to include it here for the readers convenience.  Condition (c) of Theorem \ref{thm:main-thm-B} is therefore an obvious necessary condition for finiteness of the Connes metric, but as the theorem shows, this is not sufficient. We now proceed with the proof of Theorem \ref{thm:main-thm-B}. 

\begin{proof}[Proof of Theorem \ref{thm:main-thm-B}]
Denote by $H$ the Hilbert space $\ell^2(\NN)$ and by $(e_n)_{n\in \NN}$ its natural orthonormal basis. On $H_0:=\text{span}_{\mathbb{C}}\{e_n\mid n\in \NN\}$ we define $D_0\colon H_0 \to H$ by $D_0e_n:= d_n e_n$. It is easy to see that $D_0$ is unbounded and essentially self-adjoint (cf.~\cite[Proposition 3.8]{Schm:Unbd}), and we denote by $D$ its self-adjoint closure. 
The operator $(i+D)^{-1}$ extends $(i+D_0)^{-1}$,  given by  $(i+D_0)^{-1}e_n=(i+d_n)^{-1}e_n$, and since $(i+d_n)^{-1}\underset{n\to \infty}{\longrightarrow} 0$ it follows that $(i+D)^{-1}$ is indeed a compact operator in $H$. Moreover, the spectrum of $D$ (cf.~\cite[Section 2.2]{Schm:Unbd}) is easily seen to be equal to the closure of $\{d_n\mid n\in \NN\}$, and we are therefore left with the task of constructing a unital $*$-algebra $\A\subset \BB(H)$ such that $(\A, H, D)$ is a spectral triple satisfying (b), (c) and (d).\\
To this end, choose a sequence $\varps_n>0$ decaying fast enough that 
 \begin{align}\label{eq:varps-sequence}
 \lim_{n\to \infty} (\varps_n2^{n/2}(d_{n+1}-d_n))=0,
 \end{align}
  and consider the unit vectors in $H$ defined as
\[
f_n:=\frac{1}{\sqrt{1+ \varps_n^2}}(e_n + \varps_ne_{n+1}).
\]
We denote by $q_n$ the orthogonal projection onto the subspace spanned by $f_n$. Since $f_n\in H_0\subseteq \text{Dom}(D)$, we automatically obtain that  $q_n$ preserves the domain of $D$ and we now show that $[D,q_n]$ extends boundedly to $H$. For $\xi \in H_0$, we compute as follows:
\begin{align*}
Dq_n (\xi) &= D(\inn{\xi, f_n }f_n)\\
&= \frac{1}{1+ \varps_n^2}  D\Big(\inn{\xi, e_n}e_n +\varps_n\inn{\xi, e_{n+1}}e_n +\varps_n\inn{\xi, e_n}e_{n+1} +\varps_n^{2} \inn{\xi, e_{n+1}}e_{n+1}\Big)\\
&= \frac{1}{1+ \varps_n^2}\Big(\inn{\xi, e_n}d_ne_n +\varps_n\inn{\xi, e_{n+1}}d_ne_n +\varps_n\inn{\xi, e_n}d_{n+1}e_{n+1} +\varps_n^{2} \inn{\xi, e_{n+1}}d_{n+1}e_{n+1}\Big)
\end{align*}
On the other hand, 
\begin{align*}
q_n D\xi &= q_n\Big( \sum_{k=1}^\infty \inn{\xi, e_k} d_ke_k\Big) =  \sum_{k=1}^\infty \inn{\xi, e_k} d_k \inn{e_k, f_n}f_n\\
&= \inn{\xi,e_n}d_n\inn{e_n, f_n}f_n +\inn{\xi, e_{n+1}}d_{n+1} \inn{e_{n+1}, f_n}f_n\\
&= \frac{1}{1+ \varps_n^2}\Big(\inn{\xi,e_n}d_ne_n +\inn{\xi, e_n}d_n\varps_ne_{n+1} +\inn{\xi, e_{n+1}}d_{n+1}\varps_n e_n +\inn{\xi, e_{n+1}}d_{n+1}\varps_n^2 e_{n+1} \Big)
\end{align*}
Subtracting the two expressions above yields
\begin{align}\label{eq:commutator-formula}
[D,q_n](\xi)=\frac{\varps_n}{1+\varps_n^2}\Big( \inn{\xi,e_{n+1}}(d_n-d_{n+1})e_n + \inn{\xi, e_n}(d_{n+1}-d_n)e_{n+1} \Big),
\end{align}
and thus
\begin{align*}
\left\|[D,q_n](\xi)\right\|_2^2 \leq \Big( \frac{\varps_n}{1+\varps_n^2} (d_{n+1}-d_n)\Big)^2\|\xi\|_2^2.
\end{align*}
It therefore follows that $[D,q_n]$ extends boundedly and furthermore that 
\begin{align}\label{eq:norm-estimate}
\left\|[D,q_n] \right\| \leq  \frac{\varps_n}{1+\varps_n^2} (d_{n+1}-d_n) \underset{n\to \infty}{\longrightarrow} 0,
\end{align}
by the choice of the sequence $(\varps_n)_{n}$.\\
Next note that $\inn{f_{2n}, f_{2m}}=0$ whenever $n\neq m$, and setting
\[
\A:= \text{span}_{\CC} \left\{1, q_{2n} \mid n\in \NN\right\} \subset \BB(H)
\]
therefore defines a unital, commutative $*$-subalgebra of $\BB(H)$. Moreover, 
every  $a\in \A$ preserves the domain of $D$ since this was shown to be the case for each $q_{k}$ above, and by \eqref{eq:norm-estimate} it follows that the commutator $[D,a]$ extends boundedly. Hence, $(\A, H, D)$ constitutes a commutative spectral triple and we now  prove that it satisfies (c) and (d).\\

To prove (c), assume that $a=\sum_{i=1}^l \alpha_{2n_i} q_{2n_i}\in \A$ commutes with $D$ and assume, without loss of generality, that $n_1<n_2 <\cdots < n_l$. We then need to prove that $a=0$. From \eqref{eq:commutator-formula} it follows that
\[
[D,q_n](e_k)= 
\begin{cases}
  \frac{\varps_n}{1+\varps_n^2}(d_{n+1}-d_n)e_{n+1}   &  \text{ if } k=n \\
  \frac{\varps_n}{1+\varps_n^2}(d_{n}-d_{n+1})e_{n}  &  \text{ if } k= n+1 \\
  0  &  \text{ otherwise }
\end{cases}
\]
Hence
\[
[D, a](e_{2n_l+1})=\alpha_{2n_l}  \frac{\varps_{2n_l}}{1+\varps_{2n_l}^2}(d_{2n_l}-d_{2n_l+1})e_{2n_l}, 
\]
and since $\varps_{2n_l}>0$ and $d_{2n_l+1}> d_{2n_l}$ the assumption $[D,a]=0$ forces $\alpha_{2n_l}=0$.  Repeating the argument just given, we conclude that $\alpha_{2n_i}=0$ for all $i\in \{1,\dots, l\}$ as desired.\\

We now only need to prove (d), i.e.~that  the extended metric $d_D$ on the state space of the $C^*$-closure,  $A$, of ${\A}$, attains the value infinity.
To see this,  define
\[
\delta_n:=  \frac{\varps_n}{1+\varps_n^2}(d_{n+1}-d_n) \quad \text{ and } \quad a_n:= \frac{1}{\delta_n}q_n.
\]
Then $a_{2n}\in \A$ and by \eqref{eq:norm-estimate} we also have $\|[D,a_{2n}]\|\leq 1$. Consider now the vector state  on $\BB(H)$ defined by
\[
T\longmapsto \sum_{n=1}^\infty 2^{-n}\inn{Tf_{2n}, f_{2n}},
\]
and its restriction, $\varphi$, to $A$. Note that $\varphi$ has the property that
\[
\varphi(a_{2n})=2^{-n}\delta_{2n}^{-1} =\frac{1+\varps_{2n}^2}{\varps_{2n} 2^n (d_{2n+1}-d_{2n})} \underset{n\to \infty }{\longrightarrow} \infty,
\]
where the divergence follows by our choice of the sequence $(\varps_n)_n$ in \eqref{eq:varps-sequence}.  Denote by $\psi$ the restriction to $A$ of the vector state on $\BB(H)$ defined by $T\mapsto \inn{Tf_1, f_1}$ and note that $\psi(a_{2n})=0$ for all $n\in \NN$. Hence
\[
d_D(\varphi,\psi)\geq \sup\{ |\varphi(a_{2n})-\psi(a_{2n})| \mid n\in \NN \}=\sup\{2^{-n}\delta_{2n}^{-1} \mid n\in \NN \}=\infty.
\]
This concludes the proof of Theorem \ref{thm:main-thm-B}.
\end{proof}


\end{document}